\newtheorem{theorem}{Theorem}[section]
\theoremstyle{plain}
\newtheorem{corollary}[theorem]{Corollary}
\newtheorem{defi}[theorem]{Definition}
\newtheorem{lemma}[theorem]{Lemma}
\newtheorem{prop}[theorem]{Proposition}
\newtheorem{remark}[theorem]{Remark}
\numberwithin{equation}{section}
\def\nuhat{\widehat{\nu}}
\def\muhat{\widehat{\mu}}
\def\dist{{\rm dist}}
\def\Hk{{\mathcal H}}
\def\Ok{{\mathcal O}}
\def\half{\frac{1}{2}}
\newcommand{\lam}{\lambda}
\def\Lam{\Lambda}
\newcommand{\gam}{\gamma}
\def\b0{{\bf 0}}
\newcommand{\R}{{\mathbb R}}
\newcommand{\Q}{{\mathbb Q}}
\newcommand{\Z}{{\mathbb Z}}
\newcommand{\C}{{\mathbb C}}
\def\N{{\mathbb N}}
\def\bp{{\bf p}}
\def\wh{\widehat}
\def\Lk{{\mathcal L}}
\def\Dk{{\mathcal D}}
\def\beq{\begin{equation}}
\def\eeq{\end{equation}}
\def\wt{\widetilde}
\newcommand{\Ek}{{\mathcal E}}
\newcommand{\eps}{{\varepsilon}}
\def\ve1{\vec{1}}
\def\pb{\boldsymbol{p}}
\def\thb{\boldsymbol{\theta}}
\def\Spec{{\rm Spec}}
\def\Diag{{\rm Diag}}
\def\Dsc{{\mathscr D}}
\begin{document}

\title%[Fourier decay for self-affine measures]
{Fourier decay for homogeneous self-affine measures}

\author[B.\ Solomyak]{BORIS SOLOMYAK}

\address{Boris Solomyak, Department of Mathematics, Bar-Ilan University, Ramat Gan, 5290002 Israel}
\email{bsolom3@gmail.com}

\thanks{Supported in part by the Israel Science Foundation grant 911/19.}

\date{\today}

\begin{abstract}
We show that for  Lebesgue almost all $d$-tuples $(\theta_1,\ldots,\theta_d)$, with $|\theta_j|>1$,  any self-affine measure for a homogeneous non-degenerate iterated function system
$\{Ax+a_j\}_{j=1}^m$ in $\R^d$, where $A^{-1}$ is a diagonal matrix with the entries $(\theta_1,\ldots,\theta_d)$, has power Fourier decay at infinity.
\end{abstract}

\maketitle

\thispagestyle{empty}

\section{Introduction}

For a finite positive Borel measure $\mu$ on $\R^d$, consider the Fourier transform
$$
\wh\mu(\xi) = \int_{\R^d} e^{-2\pi i \langle \xi, x\rangle}\,d\mu(x).
$$
We are interested in the decay properties of $\muhat$ at infinity. The measure $\mu$ is called {\em Rajchman} if 
$$
\lim \muhat(\xi) = 0,\ \ \mbox{as}\ |\xi|\to \infty,
$$
where $|\xi|$ is a norm (say, the Euclidean norm) of $\xi\in \R^d$. 
Whereas absolutely continuous measures are Rajchman by the Riemann-Lebesgue Lemma, it is a subtle question to decide which singular measures are such, see, e.g., the survey of Lyons
\cite{Lyons}.
A much stronger property, useful for many applications is the following.

\begin{defi}{\em
For $\alpha>0$ let
$$
\Dsc_d(\alpha) = \bigl\{\nu\ \mbox{finite positive measure on $\R^d$}:\ \left|\widehat{\nu}(t)\right| = O_\nu(|t|^{-\alpha}),\ \ |t|\to \infty\bigr\},
$$
and denote $\Dsc_d= \bigcup_{\alpha>0} \Dsc_d(\alpha)$. A measure $\nu$ is said to have {\em power Fourier decay} if $\nu\in \Dsc_d$.}
\end{defi}

Many recent papers have been devoted to the question of Fourier decay for classes of ``fractal'' measures, see e.g., \cite{BD,JS,Li,LS1,LS2,SaSte,Sol_Fourier,Bremont,AHW20,VY20,Rap21}. Here we continue this line of research, focusing on the class of
{\em homogeneous self-affine measures} in $\R^d$.
A measure $\mu$ is called self-affine  if it is the invariant measure for a self-affine iterated function system (IFS) $\{f_j\}_{j=1}^m$, with $m\ge 2$, where $f_j(x) = A_jx + a_j$, the matrices $A_j:\R^d\to \R^d$ are invertible linear contractions (in some norm) and $a_j\in \R^d$ are   ``digit'' vectors.
This means that for some probability vector $\pb = (p_j)_{j\le m}$ holds
\beq \label{ssm1}
\mu = \sum_{j=1}^m p_j (\mu\circ f_j^{-1}).
\eeq
It is well-known that this equation defines a unique probability Borel measure. The self-affine IFS is {\em homogeneous} if all $A_j$ are equal to each other: $A  = A_j$ for $j\le m$.
Denote the digit set by $\Dk:=\{a_1,\ldots,a_m\}$ and the corresponding self-affine measure by $\mu(A,\Dk,\pb)$. We will write $\pb>0$ if all $p_j>0$.
Following \cite{Hochman}, we say that the IFS is {\em affinely irreducible} if the attractor is not contained in a proper affine subspace of $\R^d$. It is easy to see that this is a necessary condition
for the self-affine measure to be Rajchman, so this will always be our assumption. By a conjugation with a translation, we can always assume that $0\in \Dk$. In this case 
affine irreducibility is equivalent to the digit set $\Dk$  being a {\em cyclic family} for $A$, that is, $\R^d$ being the smallest $A$-invariant subspace containing $\Dk$.

The IFS is {\em self-similar} if all $A_j$ are contracting similitudes, that is, $A_j = \lam_j \Ok_j$ for some $\lam_j\in (0,1)$ and orthogonal matrices $\Ok_j$. In many aspects, ``genuine'' (i.e., non-self-similar) self-affine and self-similar IFS are very different; of course, the distinction exists only for $d\ge 2$.
% In a recent paper, Li and Sahlsten \cite{LS2} studied the Fourier decay for self-affine measures
%and established power decay under some assumptions, the main one being irreducibility of the group generated by the matrices $A_j$, which never holds in the homogeneous case.

Every homogeneous self-affine measure can be expressed as an infinite convolution product
\beq \label{eq-conv}
\mu(A,\Dk,\pb) = \Bigl(\Asterisk\prod\limits_{n=0}^\infty\Bigr) \sum_{j=1}^m p_j \delta_{A^n a_j},
\eeq
and for every $\pb>0$ it is supported on the attractor (self-affine set) 
$$
K_{A,\Dk}:= \Bigl\{x\in \R^d: \ x = \sum_{n=0}^\infty A^n b_n,\ b_n \in \Dk\Bigr\}.
$$
By the definition of the self-affine measure,
$$
\muhat(\xi) = \sum_{j=1}^m p_j \int e^{-2\pi i \langle \xi, Ax + a_j\rangle}\, d\mu = \Bigl(\sum_{j=1}^m p_j  e^{ -2\pi i \langle \xi,  a_j\rangle}\Bigr) \muhat(A^t\xi),
$$
where $A^t$ is the matrix transpose of $A$. Iterating we obtain
\begin{equation} \label{eq1}
\muhat(\xi) = \prod_{n=0}^\infty\left(\sum_{j=1}^m p_j  e^{-2\pi i \langle (A^t)^n\xi,  a_j\rangle}\right) = \prod_{n=0}^\infty\left(\sum_{j=1}^m p_j  e^{-2\pi i \langle \xi,  A^n a_j\rangle}\right),
\end{equation}
where the infinite product converges, since $\|A^n\| \to 0$ exponentially fast.

%%%%%%%%%%%%%%%%%%%%%%%%

\subsection{Background}
We start with the known results on Fourier decay for classical Bernoulli convolutions $\nu_\lam$, namely, self-similar measures on the line, corresponding to the IFS $\{\lam x, \lam x+1\}$, with $\lam\in (0,1)$ and probabilities $(\half,\half)$ (often the digits
$\pm 1$ are used instead; it is easy to see that taking any two distinct digits results in the same measure, up to an affine change of variable).  Erd\H{o}s \cite{Erd1} proved that $\nuhat_\lam(t)\not\to 0$ as $t\to \infty$ when $\theta=1/\lam$ is a {\em Pisot number}.
Recall that a Pisot number is an algebraic integer greater than one, whose algebraic (Galois) conjugates are all less than one in modulus. Salem \cite{Salem} showed that if
$1/\lam$ is not a Pisot number, then  $\nuhat_\lam$ is a Rajchman measure.
In the other direction, Erd\H{o}s \cite{Erd2}  proved that for any $[a,b]\subset (0,1)$ there exists $\alpha>0$ such that
$\nu_\lam\in \Dsc_1(\alpha)$ for a.e.\ $\lam\in [a,b]$. Later, Kahane \cite{kahane} indicated that Erd\H{o}s' argument actually gives that $\nu_\lam\in \Dsc_1$ for all $\lam\in (0,1)$ outside a set of zero Hausdorff dimension. (We should mention that   very few specific $\lam$ are known, for which $\nu_\lam$ has power Fourier decay, see
Dai, Feng, and Wang \cite{DFW}.) In the original papers of Erd\H{o}s and Kahane there were no explicit quantitative bounds; this was done in the survey \cite{PSS00}, where the expression
``Erd\H{o}s-Kahane argument'' was used first.
The general case of a homogeneous self-similar measure on the line is treated analogously to Bernoulli convolutions: the self-similar measure is still an infinite convolution and the Erd\H{o}s-Kahane argument on power Fourier decay goes through with minor modifications, see \cite{DFW,ShSol16}.
Although one of the main motivations for the study of the Fourier transform has been  the question of absolute continuity/singularity of $\nu_\lam$, here we do not discuss it but refer the reader to the recent survey \cite{Var}.

Next we turn to the non-homogeneous case on the line. Li and Sahlsten \cite{LS1} proved that if $\mu$ is a self-similar measure on the line with contraction ratios
$\{r_i\}_{i=1}^m$ and there exist $i\ne j$ such that $\log r_i/\log r_j$ is irrational, then $\mu$ is Rajchman. Moreover, they showed logarithmic decay of the Fourier transform under a Diophantine condition. A related result for self-conformal measures was recently obtained by Algom, Rodriguez Hertz, and Wang \cite{AHW20}. Br\'emont \cite{Bremont} obtained an (almost) complete characterization of (non)-Rajchman self-similar measures in the case when $r_j = \lam^{n_j}$ for $j\le m$. To be non-Rajchman, it is necessary for $1/\lam$ to be Pisot. For ``generic'' choices of the probability vector $\bp$, assuming that $\Dk\subset \Q(\lam)$ after an affine conjugation, this is also sufficient, but there are some  exceptional cases of positive co-dimension. Varj\'u and Yu \cite{VY20} proved logarithmic decay of the Fourier transform in the case when $r_j = \lam^{n_j}$ for $j\le m$ and $1/\lam$ is algebraic, but not a Pisot or Salem number. In \cite{Sol_Fourier} we showed that outside a zero Hausdorff dimension exceptional set of parameters, all self-similar measures on $\R$ belong to $\Dsc_1$; however, the exceptional set is not explicit. 

Turning to higher dimensions, we mention the recent paper by Rapaport \cite{Rap21}, where he gives an
algebraic characterization of self-similar IFS for which there exists a probability vector yielding a non-Rajchman self-similar measure. Li and Sahlsten \cite{LS2} investigated 
self-affine measures in $\R^d$ and obtained power Fourier decay under some algebraic conditions, which never hold for a homogeneous self-affine IFS. Their main assumptions
are total irreducibility
of the closed group generated by the contraction linear maps $A_j$ and non-compactness of the projection of this group to  $PGL(d,\R)$. For $d=2,3$ they showed that this is sufficient.

%%%%%%%%%%%%%%%%%%%%%%%%%%%%%%%%%%%%%

\subsection{Statement of results} 
We assume that $A$ is a matrix diagonalizable over $\R$. Then we can reduce the IFS, via a linear change of variable, to one where $A$ is a diagonal matrix with real entries.
Given $A = \Diag[\theta_1^{-1},\ldots,\theta_d^{-1}]$, with $|\theta_j|>1$, a set of digits $\Dk = \{a_1,\ldots,a_m\}\subset \R^d$, and a probability vector $\pb$, we write 
$\thb= (\theta_1,\ldots,\theta_d)$ and
denote by $\mu(\thb,\Dk,\pb)$ the self-affine measure defined by \eqref{ssm1}. Our main motivation is the class of measures  which can be viewed as ``self-affine Bernoulli convolutions'', with $A = \Diag[\theta^{-1}_1,\ldots,\theta^{-1}_d]$ a diagonal matrix with distinct real entries and $\Dk = \{0, (1,\ldots,1)\}$. In this special case we denote the self-affine measure by $\mu(\thb,\pb)$.

\begin{theorem} \label{th-decay1}
There exists an exceptional set $E\subset \R^d$, with $\Lk^d(E)=0$, such that for all $\thb\in \R^d\setminus E$, with $\min_j|\theta_j|>1$, for all  sets of digits $\Dk$, such that the IFS is
affinely irreducible, and all  $\pb>0$, holds $\mu(\thb,\Dk,\pb)\in \Dsc_d$.
\end{theorem}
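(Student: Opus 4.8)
The plan is to run a multidimensional Erd\H{o}s--Kahane argument on the infinite product \eqref{eq1}. First I would record a lower bound for the defect of each factor. Writing $g_n(\xi) = \sum_{j=1}^m p_j e^{-2\pi i\langle A^n a_j,\xi\rangle}$, so that $\muhat(\xi)=\prod_{n\ge 0}g_n(\xi)$, I expand $|g_n(\xi)|^2$ and use $1-\cos(2\pi t)\ge c\,\dist(t,\Z)^2$ to get
\beq
1-|g_n(\xi)|^2 \;\ge\; c\sum_{j<j'}p_jp_{j'}\,\dist\!\bigl(\langle A^n(a_j-a_{j'}),\xi\rangle,\Z\bigr)^2 .
\eeq
Since $\pb>0$ and $-\log y\ge 1-y$, this yields
$$
-\log|\muhat(\xi)| \;\gtrsim\; \sum_{n\ge 0}\ \max_{w\in\Dk-\Dk}\ \dist\!\bigl(\langle A^n w,\xi\rangle,\Z\bigr)^2 .
$$
Because $\|A^n\|\to 0$ geometrically, only the indices $0\le n\le N$ with $N\asymp \log|\xi|$ carry weight. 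Hence it suffices to prove: for a.e.\ $\thb$ there is $c_0>0$ so that, for every affinely irreducible $\Dk$ and every large $\xi$, at least $c_0 N$ of the numbers $\langle A^n w,\xi\rangle$ stay a fixed distance from $\Z$; this gives $|\muhat(\xi)|\le |\xi|^{-\alpha}$ for some $\alpha=\alpha(\thb,\Dk,\pb)>0$, i.e.\ $\mu\in\Dsc_d$.

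Next I would set up the recurrence driving the counting. Fix a difference $w=a_j-a_{j'}$ with support $S\subseteq\{1,\dots,d\}$. Since $A=\Diag[\theta_1^{-1},\dots,\theta_d^{-1}]$, the sequence $u_n:=\langle A^n w,\xi\rangle=\sum_{k\in S}\theta_k^{-n}w_k\xi_k$ is a sum of $|S|$ geometric terms, so its time reversal $v_n:=u_{N-n}$ obeys the linear recurrence whose characteristic roots are $\{\theta_k:k\in S\}$, with coefficients equal, up to sign, to the elementary symmetric functions $e_i(\thb)$ of these $\theta_k$ --- fixed polynomials in $\thb$. Reversing time is the key point: $v_n$ grows, so if $v_{n-1},\dots,v_{n-|S|}$ lie near integers $m_{n-1},\dots,m_{n-|S|}$, then $v_n$ lies near the fixed combination $\sum_i (\pm)\,e_i(\thb)\,m_{n-i}$, and whether $v_n$ is again near $\Z$ is governed by the fractional part of that combination. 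Affine irreducibility forces $\{A^n w\}$ to span $\R^d$, so for each coordinate some difference activates it; a Vandermonde argument then shows that for every large $\xi$ one can select a difference for which the associated $v_n$ is of size comparable to $|\xi|$ over a positive-proportion window of indices, reducing the whole problem to the one-sequence statement.

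The heart of the matter is the counting estimate. For $v_n$ to remain within $\eps$ of $\Z$ for a proportion $\ge 1-\delta$ of the steps $n\le N$, its nearest-integer sequence $(m_n)$ must satisfy the recurrence approximately: each admissible $m_n$ differs from $\sum_i(\pm)e_i(\thb)m_{n-i}$ by at most $O(\eps)$, which for fixed integers $(m_{n-i})$ confines $\thb$ to an $O(\eps)$-neighbourhood of an algebraic hypersurface. Crucially this constraint involves only $\thb$ and the \emph{integer} data $(m_n)$, not the continuous data $w,\xi,\pb$; this is exactly what allows the exceptional set to be taken uniform in $\Dk$ and $\pb$. Controlling the branching (for $\thb$ outside a small set, only boundedly many integers $m_n$ are compatible at each step) and summing the measures of the corresponding neighbourhoods shows that, on any compact region of $\thb$-space with $\min_j|\theta_j|>1$, the set of $\thb$ admitting such slow decay at scale $N$ has $\Lk^d$-measure $\le C\rho^{N}$ with $\rho<1$. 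A Borel--Cantelli argument then produces the null set $E$.

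I expect the hardest step to be the multidimensional Erd\H{o}s--Kahane count delivering \emph{Lebesgue}-measure (not merely Hausdorff-dimension) control, carried out uniformly over all directions $\xi$ and all affinely irreducible digit sets at once. Two points require the most care: handling degenerate configurations --- coincidences among the $\theta_k$, or supports $S$ forcing repeated characteristic roots and a genuinely lower-order recurrence --- where affine irreducibility must be converted into quantitative non-degeneracy of the associated Vandermonde and companion maps; and ensuring the per-scale bound $C\rho^{N}$ is summable with constants independent of $\Dk$ and $\pb$, so that a single $E$ serves for all of them simultaneously.
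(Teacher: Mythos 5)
Your overall architecture coincides with the paper's Erd\H{o}s--Kahane scheme: the factor-wise defect inequality, the time reversal $v_n=u_{N-n}$ (which is exactly the paper's renormalization $\eta(\xi)=(A^t)^{N(\xi)}\xi$ to a bounded window), the nearest-integer sequences, and the covering-plus-Borel--Cantelli conclusion. The genuine gap is at the step you yourself call the heart of the matter. Your recurrence $v_n\approx\sum_i(\pm)\,e_i(\thb)\,m_{n-i}$ has coefficients which are the \emph{unknowns}: as $\thb$ ranges over a compact region with $\min_j|\theta_j|>1$, the quantity $\sum_i(\pm)e_i(\thb)m_{n-i}$ sweeps an interval of length comparable to $\max_i|m_{n-i}|\asymp b_1^{\,n}$, so the claim that ``only boundedly many integers $m_n$ are compatible at each step'' is false uniformly in $\thb$; it becomes true only after $\thb$ has already been localized to scale $b_1^{-n}$, which is what the argument is supposed to produce. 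Likewise, one constraint $|m_n-\sum_i(\pm)e_i(\thb)m_{n-i}|\le O(\eps)$ confines $\thb$ to a slab around a hypersurface of measure $O(\eps)$, not $O(\rho^N)$; to get an exponentially small intersection you must prove that $d$ consecutive such slabs, whose normals are built from the integer data $(m_{n-1},\dots,m_{n-d})$, are quantitatively transverse --- i.e.\ lower bounds on Hankel/Vandermonde-type determinants of an unknown integer sequence. You flag this (``quantitative non-degeneracy of the Vandermonde and companion maps'') but supply no mechanism, and without it neither the branching bound nor the per-scale measure bound $C\rho^N$ follows.

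The paper sidesteps multidimensional transversality entirely, and this is the idea missing from your proposal. It splits the bad set according to the dominant coordinate of $\eta$, fixes $\theta_1,\dots,\theta_{d-1}$, and estimates the \emph{one-dimensional} Lebesgue measure of the slice in $\theta_d$, finishing with Fubini. Within a slice, the finite differences $A_n^{(j)}=A_{n+1}^{(j-1)}-\theta_jA_n^{(j-1)}$ of \eqref{induc1} eliminate the now-known parameters, leaving $\wt A_n^{(d-1)}=\eta_d\prod_{k<d}(\theta_d-\theta_k)\theta_d^n$, a pure geometric sequence bounded below by $c_1^{d-1}b_1^n$ thanks to the separation $|\theta_i-\theta_j|\ge c_1$ (harmless, as the diagonal is null); the single remaining unknown $\theta_d$ is then eliminated from the branching step by the ratio trick $(A_{n+1}^{(d-1)})^2/A_n^{(d-1)}\approx A_{n+2}^{(d-1)}$ of \eqref{rational}, so that $K_{n+d+1}$ is, up to $O(1)$, a rational function of $K_n,\dots,K_{n+d}$ with coefficients depending only on the \emph{fixed} $\theta_1,\dots,\theta_{d-1}$. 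This yields uniformly bounded branching and simultaneous localization of $\theta_d$ to scale $b_1^{-n}$, which is exactly what your scheme lacks. A secondary point: your covering constants depend on $w\in\Dk-\Dk$ through the initial integers $m_n$, so a single null set serving all $\Dk,\pb$ requires the paper's normalization (conjugating so the chosen difference becomes $(1,\dots,1)$, passing to a coordinate subspace when some coordinates vanish); this is fixable, but must be said. To salvage your route you would have to prove the determinant bounds; otherwise, adopt the slicing.
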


The theorem is a consequence of a more quantitative statement.

\begin{theorem} \label{th-decay2}
Fix $1 < b_1 < b_2 < \infty$ and $c_1,\eps >0$. Then there exist $\alpha>0$ and $\Ek\subset \R^d$, depending on these parameters, such that $\Lk^d(\Ek)=0$ and for all $\thb\not\in \Ek$ satisfying
$$
b_1 \le \min_j|\theta_j| < \max_j|\theta_j| \le b_2\ \ \ \mbox{and}\ \ \ |\theta_i - \theta_j| \ge c_1,\ i\ne j,
$$
for all digit sets $\Dk$ such that the IFS is affinely irreducible, and all $\pb$ such that $\min_j p_j \ge \eps$, we have $\mu(\thb,\Dk, \pb) \in \Dsc_d(\alpha)$.
\end{theorem}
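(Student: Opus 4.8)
The plan is to run a multidimensional Erd\H{o}s--Kahane argument on the infinite product \eqref{eq1}. Writing $y_n := A^n\xi$, so that $y_n^{(l)} = \theta_l^{-n}\xi_l$ and $\langle\xi, A^n a_j\rangle = \langle y_n, a_j\rangle$, and denoting the $n$-th factor by $\Phi_n(\xi) := \sum_j p_j e^{-2\pi i\langle y_n,a_j\rangle}$, I would first record the elementary bound
\beq
|\Phi_n(\xi)|^2 = \sum_{j,k}p_jp_k\cos\bigl(2\pi\langle y_n, a_j-a_k\rangle\bigr) \le 1 - c\,\eps^2 \max_{j,k}\dist\bigl(\langle y_n,a_j-a_k\rangle,\Z\bigr)^2,
\eeq
using $\min_j p_j\ge\eps$ and $\cos(2\pi t)\le 1 - c\,\dist(t,\Z)^2$. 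Taking logarithms in \eqref{eq1}, power decay $|\muhat(\xi)| = O(|\xi|^{-\alpha})$ will follow once I show that, for $\thb$ outside a null set and all large $\xi$, a fixed positive proportion of the scales $n\le N$, $N\asymp\log|\xi|$, are \emph{good}, meaning $\max_{j,k}\dist(\langle y_n,a_j-a_k\rangle,\Z)\ge\gamma$ for a fixed $\gamma>0$; indeed $\delta N$ good scales give $-\log|\muhat(\xi)|\gtrsim \eps^2\gamma^2\delta N\gtrsim\log|\xi|$.

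To produce the good scales I would reduce to a single coordinate. For a given $\xi$ pick $l_0$ with $|\xi_{l_0}| = \max_l|\xi_l| \ge |\xi|/\sqrt d$; by affine irreducibility (equivalently, since the $\theta_l$ are distinct, for each coordinate $l$ some difference $a_j-a_k$ has nonzero $l$-th entry) there is a pair $a_j, a_k$ with $(a_j - a_k)^{(l_0)}\ne 0$, and then
\beq
\langle y_n, a_j - a_k\rangle = \theta_{l_0}^{-n}\,\xi_{l_0}(a_j-a_k)^{(l_0)} + R_n,
\eeq
where the drift $R_n$ is independent of $\theta_{l_0}$. With the other coordinates frozen, I would run the one-dimensional Erd\H{o}s--Kahane argument in the variable $\theta_{l_0}$ on the orbit $u_n := \theta_{l_0}^{-n}\xi_{l_0}(a_j-a_k)^{(l_0)}$, calling $n$ good when $\dist(u_n + R_n,\Z)\ge\gamma$. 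The engine is transversality: on the active range where $|u_n|\asymp 1$ one has $\bigl|\tfrac{d}{d\theta_{l_0}}u_n\bigr| = n\,\theta_{l_0}^{-n-1}|\xi_{l_0}(a_j-a_k)^{(l_0)}|\asymp n$, so the phase sweeps across $\R/\Z$ rapidly, and increasingly fast with $n$. Coding $u_n$ by its nearest integers and using the recursion $u_{n-1} = \theta_{l_0}u_n$, each symbolic itinerary with fewer than $\delta N$ good scales should confine $\theta_{l_0}$ to a set of exponentially small measure; summing over such itineraries yields a bound $\le C\rho^N$ with $\rho<1$. Since the active range of coordinate $l_0$ has length $\asymp\log|\xi|$, this single coordinate already furnishes the required proportion of good scales.

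Finally I would assemble the global statement. Fixing the allowed range for $\thb$ and a countable net of frequency profiles (the relevant information in $\xi$ is essentially $l_0$ together with $N\asymp\log|\xi|$, so a net over dyadic scales and finitely many sectors suffices), I integrate the one-coordinate estimate over the remaining parameters by Fubini and sum over the net; Borel--Cantelli then gives a single null set $\Ek$, depending only on $b_1,b_2,c_1,\eps$, outside of which every large $\xi$ has at least $\delta N$ good scales, with $\delta,\gamma$, hence $\alpha$, independent of $\Dk$ and $\pb$. The main obstacle will be step two: carrying the Erd\H{o}s--Kahane counting and measure estimate through in the presence of the frozen additive drift $R_n$ while keeping all constants uniform over the infinite family of admissible digit sets. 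The classical argument is cleanest when the target is the integer lattice, whereas here the targets $-R_n\bmod 1$ vary with $n$, so I must verify that the symbolic coding of $(u_n)$ and the resulting confinement of $\theta_{l_0}$ are unaffected by shifting the good/bad intervals, and that the count of itineraries with few good scales still grows slowly enough to beat the $\rho^N$ measure gain. Establishing this robustness --- and the attendant uniformity in $\Dk$, $\pb$, and $\xi$ --- is the technical heart of the proof.
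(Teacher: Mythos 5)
Your skeleton matches the paper's: the product formula \eqref{eq1}, an elementary bound $1-c\,\eps\,\dist(\cdot,\Z)^2$ on each factor, reduction to showing a positive proportion of ``good'' scales $n\le N\asymp\log|\xi|$, nearest-integer coding, a counting-plus-Borel--Cantelli argument, and Fubini slicing in one $\theta$-coordinate. The genuine gap is exactly in the step you call the technical heart, and it is worse than ``shifted targets.'' Your drift $R_n=\sum_{l\ne l_0}\theta_l^{-n}\xi_l(a_j-a_k)^{(l)}$ is indeed independent of $\theta_{l_0}$, but it is \emph{not} frozen: it depends on the frequency $\xi$, which is universally quantified (the exceptional set must lie in $\thb$-space and work for all large $\xi$ simultaneously). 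Fubini lets you freeze $\theta_l$, $l\ne l_0$, but not the $d-1$ renormalized amplitudes of $\xi$, which range over a continuum. With an unknown drift, the Erd\H{o}s--Kahane coding collapses: for a pure geometric orbit, consecutive nearest integers $K_n,\dots,K_{n+d}$ pin $\theta_{l_0}$ to an interval of length $O(b_1^{-n})$ and, when the errors are small, force the next integer; with unknown $R_n$ the same integer data is compatible with a continuum of values of $\theta_{l_0}$, because a change in $\theta_{l_0}$ can be compensated by a change in the drift amplitudes (which, in the active range, can even dominate $u_n$). So ``each symbolic itinerary confines $\theta_{l_0}$ to an exponentially small set'' is false as stated, and the itinerary count controls nothing. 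Your final assembly repeats the same slip: the relevant information in $\xi$ is not ``$l_0$ together with $N$'' but the whole renormalized vector $\eta=(A^t)^{N(\xi)}\xi$, a continuum, so no countable net of ``frequency profiles'' suffices. Patching this by discretizing the unknown amplitudes fails quantitatively: since $R_n$ involves them multiplied by $\theta_l^{n}$ with $n$ up to $N$, you would need precision $\sim b_2^{-N}$, i.e.\ $\sim b_2^{(d-1)N}$ boxes, and $b_2^{(d-1)N}\exp(O(\delta\log(1/\delta)N))\,b_1^{-N}$ does not tend to $0$. In effect, treating the other coordinates as a known drift assumes away the multidimensional problem.

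The paper's missing idea is an algebraic elimination, not a robustness argument: starting from the integer data $A_n^{(0)}=K_n$, it forms twisted differences
\[
A_n^{(j)} \;=\; A_{n+1}^{(j-1)}-\theta_j A_n^{(j-1)},\qquad j=1,\dots,d-1,
\]
with the \emph{frozen} parameters $\theta_1,\dots,\theta_{d-1}$. Each such operator annihilates the entire progression $\eta_j\theta_j^n$, unknown amplitude included, so after $d-1$ steps one is left, up to errors $(1+b_2)^{j}\max_i|\eps_i|$, with the pure orbit $\eta_d\prod_{k<d}(\theta_d-\theta_k)\theta_d^n$. The one-dimensional machine then runs on quantities built only from integers and frozen $\theta$'s: $K_n,\dots,K_{n+d}$ determine $\theta_d$ to within $O(b_1^{-n})$, and if $d+2$ consecutive errors are below a fixed $\rho(H)$ then $K_{n+d+1}$ is uniquely the nearest integer to a rational function $R_{\theta_1,\dots,\theta_{d-1}}(K_n,\dots,K_{n+d})$ in which $\eta$ never appears. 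This is what makes the covering of the bad set of $\theta_d$'s by $\exp(O_H(\delta\log(1/\delta)N))$ intervals of length $b_1^{-N}$ uniform over all admissible $\eta$ --- precisely the uniformity in the frequency that your drift formulation cannot deliver, and without which the Borel--Cantelli step has nothing to sum.
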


\begin{proof}[Reduction of Theorem~\ref{th-decay1} to Theorem~\ref{th-decay2}.] 
For $M\in \N$ let $\Ek^{(M)}$ be the exceptional set obtained from Theorem~\ref{th-decay2} with $b_1 = 1 + M^{-1}, b_2 = M$, and $\eps = c_1 = M^{-1}$. Then the set
$$
E = \bigcup_{M=2}^\infty \Ek^{(M)} \cup \bigl\{\thb:\ \exists\,i\ne j, \ \theta_i = \theta_j\bigr\}.
$$
has the desired properties. \end{proof}

The proof of Theorem~\ref{th-decay2} uses a version of the Erd\H{o}s-Kahane technique. We follow the general scheme of \cite{PSS00,ShSol16}, but this is not a trivial extension.

In view of the convolution structure, Theorem~\ref{th-decay2} yields some information on absolute continuity of self-affine measures, by a standard argument.

\begin{corollary}
Fix $1 < b_1 < b_2 < \infty$ and $c_1,\eps >0$. Then there exist a sequence $n_k\to \infty$ and $\wt \Ek_k\subset \R^d$, depending on these parameters, such that $\Lk^d(\wt\Ek_k)=0$ 
and for all $\thb\not\in \wt\Ek_k$ satisfying
$$
b_1 \le \min_j|\theta^{n_k}_j| < \max_j|\theta^{n_k}_j| \le b_2\ \ \ \mbox{and}\ \ \ |\theta^{n_k}_i - \theta^{n_k}_j| \ge c_1,\ i\ne j,
$$
for all digit sets $\Dk$ such that the IFS is affinely irreducible, and all $\pb$ such that $\min_j p_j \ge \eps$, the measure $\mu(\thb,\Dk, \pb)$ 
is absolutely continuous with respect to $\Lk^d$, with a Radon-Nikodym derivative in $C^k(\R^d),\ k\ge 0$.
\end{corollary}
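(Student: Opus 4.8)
The plan is to deduce smoothness of the density from a sufficiently fast power decay of $\muhat$, the required rate being manufactured by grouping the defining infinite product into $n$ arithmetic progressions and applying Theorem~\ref{th-decay2} to a single block. Recall the standard Fourier-analytic criterion: if $\int_{\R^d}|\muhat(\xi)|\,(1+|\xi|)^k\,d\xi<\infty$, then $\mu$ is absolutely continuous and its Radon--Nikodym derivative $f(x)=\int_{\R^d}\muhat(\xi)e^{2\pi i\langle\xi,x\rangle}\,d\xi$ lies in $C^k(\R^d)$, since one may differentiate $k$ times under the integral sign. This integral converges as soon as $|\muhat(\xi)|=O(|\xi|^{-\beta})$ with $\beta>d+k$. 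Thus it suffices to boost the exponent $\alpha$ furnished by Theorem~\ref{th-decay2} to an arbitrarily large value, at the cost of shrinking the admissible range of $\thb$.

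First I would record the block decomposition. Writing $\Phi(\eta)=\sum_{j=1}^m p_j e^{-2\pi i\langle\eta,a_j\rangle}$ and $A=\Diag[\theta_1^{-1},\ldots,\theta_d^{-1}]$, formula~\eqref{eq1} reads $\muhat(\xi)=\prod_{l=0}^\infty \Phi(A^l\xi)$. Fixing $n\in\N$ and splitting the index $l$ according to its residue modulo $n$ gives
\beq
\muhat(\xi)=\prod_{r=0}^{n-1}\ \prod_{s=0}^\infty \Phi\bigl((A^n)^s A^r\xi\bigr)=\prod_{r=0}^{n-1}\widehat{\sigma}(A^r\xi),
\eeq
where $\sigma:=\mu(\thb^{(n)},\Dk,\pb)$ is the self-affine measure with contraction $A^n$, that is, with parameter vector $\thb^{(n)}=(\theta_1^n,\ldots,\theta_d^n)$, but with the \emph{same} digit set $\Dk$ and the \emph{same} probability vector $\pb$ (here $A^t=A$, so the push-forward phase is $\widehat\sigma(A^r\xi)$). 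Equivalently, $\mu$ is the convolution of the $n$ push-forwards of $\sigma$ under the maps $x\mapsto A^r x$, $0\le r\le n-1$. The point of keeping $\Dk$ and $\pb$ unchanged is that the hypotheses of Theorem~\ref{th-decay2} transfer to $\sigma$: the probabilities still satisfy $\min_j p_j\ge\eps$, and with $n=n_k$ the block parameters obey $b_1\le\min_j|\theta_j^n|<\max_j|\theta_j^n|\le b_2$ and $|\theta_i^n-\theta_j^n|\ge c_1$ precisely under the corollary's assumptions. I would further note that the separation $|\theta_i^n-\theta_j^n|\ge c_1>0$ guarantees that $A^n$ has simple spectrum, so the coordinate subspaces invariant under $A^n$ coincide with those invariant under $A$; hence affine irreducibility of the original IFS is inherited by $\{A^nx+a_j\}$. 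Therefore, whenever $\thb^{(n)}\notin\Ek$, Theorem~\ref{th-decay2} yields $|\widehat{\sigma}(\eta)|\le C|\eta|^{-\alpha}$ with $\alpha=\alpha(b_1,b_2,c_1,\eps)$ fixed.

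It then remains to multiply the blocks. Since $\max_j|\theta_j|\le b_2^{1/n}$, one has $|A^r\xi|\ge(\max_j|\theta_j|)^{-r}|\xi|\ge b_2^{-1}|\xi|$ for $0\le r\le n-1$, so all $n$ factors satisfy $|A^r\xi|\ge1$ once $|\xi|\ge b_2$, and
\beq
|\muhat(\xi)|=\prod_{r=0}^{n-1}|\widehat{\sigma}(A^r\xi)|\le C^n (\max_j|\theta_j|)^{\alpha n(n-1)/2}|\xi|^{-\alpha n}\le C'\,|\xi|^{-\alpha n},\qquad |\xi|\ge b_2.
\eeq
Thus the effective decay exponent is multiplied by $n$. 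Given $k\ge0$ I would set $n_k:=\lceil(d+k+1)/\alpha\rceil$, so that $\alpha n_k>d+k$ and the integral criterion of the first paragraph yields $f\in C^k(\R^d)$; note $n_k\to\infty$. Finally I define the exceptional set $\wt\Ek_k:=\{\thb:\ (\theta_1^{n_k},\ldots,\theta_d^{n_k})\in\Ek\}$; because $\thb\mapsto\thb^{(n_k)}$ has nonvanishing Jacobian $\prod_j n_k\theta_j^{n_k-1}$ off the coordinate hyperplanes (which are $\Lk^d$-null), the preimage of the null set $\Ek$ is null, so $\Lk^d(\wt\Ek_k)=0$.

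The substantive step is the block decomposition combined with the rate multiplication: the gain in smoothness comes entirely from reinterpreting $\mu$ as a convolution of $n_k$ rescaled copies of $\sigma$, and the constrained range $|\theta_j^{n_k}|\in[b_1,b_2]$ is exactly what forces $\max_j|\theta_j|\le b_2^{1/n_k}$, keeping the geometric factor $(\max_j|\theta_j|)^{\alpha n_k(n_k-1)/2}$ under control and isolating the clean exponent $\alpha n_k$. The only genuine verifications are that affine irreducibility and the lower probability bound survive the passage to the block IFS, both handled above via the separation hypothesis and the preservation of $\Dk,\pb$; the anisotropic product estimate is otherwise routine.
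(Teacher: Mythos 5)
Your proof is correct and follows essentially the same route as the paper: decompose $\mu$ into $n$ blocks (the paper writes this as the convolution $\mu(A^{n},\Dk,\pb)\ast\mu(A^{n},A\Dk,\pb)\ast\cdots\ast\mu(A^{n},A^{n-1}\Dk,\pb)$, which is exactly your Fourier-side product $\prod_{r=0}^{n-1}\widehat{\sigma}(A^{r}\xi)$), apply Theorem~\ref{th-decay2} to the block system to multiply the decay exponent by $n$, choose $n_k$ with $n_k\alpha>d+k$, and pull back the exceptional set under $\thb\mapsto\thb^{n_k}$. Your explicit verifications of the points the paper labels ``easy to see'' (transfer of affine irreducibility via the separation hypothesis, nullity of the pulled-back exceptional set via the nonvanishing Jacobian) are accurate.
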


\begin{proof}[Proof (derivation)]
Let $n\ge 2$. It follows from \eqref{eq-conv} that
$$
\mu(A,\Dk,\bp) = \mu(A^n,\Dk,\bp) * \mu(A^n,A\Dk,\bp)\ldots * \mu(A^n, A^{n-1}\Dk,\bp).
$$
It is easy to see that if the original IFS is affinely irreducible, then so are the IFS associated with $(A^n,A^j\Dk)$, and moreover, these IFS are all affine conjugate to each other.
Therefore, if $\mu(A^n,\Dk,\bp)\in \Dsc_d(\alpha)$, then $\mu(A,\Dk,\bp) \in \Dsc_d(n\alpha)$. As is well-known,
$$
\mu\in \Dsc_d(\beta),\ \beta> d+k \implies \frac{d\mu}{d\Lk^d} \in C^k(\R^d),
$$
so we can take $n_k$ such that $n_k \alpha > d+k$, and $\wt \Ek_k = \{\thb: \thb^{n_k}\in \Ek\}$, where $\alpha$ and $\Ek$ are from Theorem~\ref{th-decay2}.
\end{proof}

\begin{remark} {\em 
{\bf (a)} In general, the power decay cannot hold for all $\thb$; for instance,  it is easy to see that the measure $\mu(\thb,\pb)$ is not Rajchman if at least one of $\theta_k$ is a Pisot number.
Thus in the most basic case with two digits, the  exceptional set has Hausdorff dimension  at least $d-1$.

{\bf (b)} It is natural to ask what happens if $A$ is not diagonalizable over $\R$. A complex eigenvalue of $A$ corresponds to a 2-dimensional homogeneous self-similar IFS with rotation, or an IFS of the form $\{\lam z + a_j\}_{j=1}^m$, with $\lam\in \C$, $|\lam|<1$, and $a_j \in \C$. In \cite{ShSol16b} it was shown that for all $\lam$ outside a set of Hausdorff dimension zero, the corresponding self-similar measure belongs to $\Dsc_2$. It may be possible to combine the methods of \cite{ShSol16b} with those of the current paper to obtain power Fourier decay for a typical $A$ diagonalizable over $\C$. It would also be interesting to consider the case of non-diagonalizable $A$, starting with a single Jordan block.

{\bf (c)} In the special case of $d=2$ and $m=2$, our system reduces to a planar self-affine IFS, conjugate to $\{(\lam x, \gam y) \pm (-1,1)\}$ for $0 < \gam < \lam < 1$. This system has been studied by many authors, especially the dimension and topological properties of its attractor, see \cite{HareSid17} and the references therein.
%among them \cite{PU,Shm06,ShSol06,DJK14,Sol15,HareSid17} (a higher-dimensional generalization was studied in \cite{HareSid15}). 
For our work, the most relevant is the paper by Shmerkin \cite{Shm06}. Among other results, he proved absolute continuity with a density in $L^2$ of the self-affine measure (with some fixed probabilities) almost everywhere in some region, in particular, in some explicit neighborhood of $(1,1)$.
He also showed that if $(\lam^{-1},\gam^{-1})$ for a {\em Pisot pair}, then the measure is not Rajchman and hence singular.
}
\end{remark}

\subsection{Rajchman self-affine measures}
The question ``when is $\mu(A,\Dk,\pb)$ is Rajchman?'' is not addressed here. Recently  Rapaport \cite{Rap21} obtained an (almost) complete characterization of {\em self-similar} Rajchman measures in $\R^d$. Of course, our situation is vastly simplified by the assumption that the IFS is homogeneous, but still it is not completely straightforward. The key notion here is the following.

\begin{defi}
A collection of numbers $(\theta_1,\ldots,\theta_m)$ (real or complex) is called a {\em Pisot family} or a {\em P.V.\ $m$-tuple} if 

{\bf (i)} $|\theta_j|>1$ for all $j\le m$ and

{\bf (ii)} there is a monic integer polynomial $P(t)$,  such that $P(\theta_j) = 0$ for all $j\le m$, whereas every other root $\theta'$ of $P(t)$ satisfies $|\theta'|<1$.
\end{defi}

It is not difficult to show, using the classical techniques of Pisot \cite{Pisot} and Salem \cite{Salem}, as well as some ideas from \cite[Section 5]{Rap21} that
\begin{itemize}
\item If $\mu(A,\Dk,\pb)$ is not a Rajchman measure and the IFS is affinely irreducible, then the spectrum $\Spec(A^{-1})$ contains a Pisot family;
\item if $\Spec(A^{-1})$ contains a Pisot family, then for a ``generic'' choice of $\Dk$, with $m\ge 3$, the measure $\mu(A,\Dk,\pb)$ is Rajchman; however,
\item if $\Spec(A^{-1})$ contains a Pisot family, then under appropriate conditions the measure $\mu(A,\Dk,\pb)$ is not Rajchman. For instance, this holds if there is at least one conjugate of the elements of the Pisot family less than 1 in absolute value, $m=2$, and $A$ is diagonalizable over $\R$.
\end{itemize}
We omit the details.

%%%%%%%%%%%%%%%%%%%%%%%%

\section{Proofs}

The following is an elementary inequality.

\begin{lemma} Let $\bp = (p_1,\ldots,p_m)>0$ be a probability vector and $\alpha_1=0,\ \alpha_j \in \R$, $j=2,\ldots,m$. Denote $\eps = \min_j p_j$ and write $\|x\|=\dist(x,\Z)$. Then
for any $k\le m$,
\beq \label{elem}
\left|\sum_{j=1}^m p_j e^{-2\pi i \alpha_j}\right| \le 1 - 2\pi\eps \|\alpha_k\|^2.
\eeq
\end{lemma}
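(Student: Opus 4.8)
The plan is to reduce the estimate to a statement about just two phases, by first rotating the whole sum onto the positive real axis and then discarding all but the $j=1$ and $j=k$ terms. Write $S = \bigl|\sum_{j=1}^m p_j e^{-2\pi i\alpha_j}\bigr|$ for the quantity to be bounded, and choose $\psi\in\R$ so that $e^{i\psi}\sum_{j=1}^m p_j e^{-2\pi i\alpha_j} = S \ge 0$. The imaginary part then vanishes identically, and using $\sum_j p_j = 1$ together with $\cos\theta = 1-2\sin^2(\theta/2)$ I get
\[
S = \sum_{j=1}^m p_j \cos(\psi - 2\pi\alpha_j) = 1 - 2\sum_{j=1}^m p_j \sin^2\Bigl(\tfrac{\psi}{2}-\pi\alpha_j\Bigr),
\]
so that $1 - S = 2\sum_{j=1}^m p_j \sin^2\bigl(\tfrac{\psi}{2}-\pi\alpha_j\bigr)$.

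Next, since every summand is nonnegative I would keep only the indices $j=1$ and $j=k$ (the case $k=1$ is trivial, as then $\|\alpha_k\|=0$ and the claim reads $S\le 1$). Recalling $\alpha_1 = 0$ and $p_1,p_k \ge \eps$, this yields
\[
1 - S \ge 2\eps\Bigl(\sin^2(\tfrac{\psi}{2}) + \sin^2(\tfrac{\psi}{2}-\pi\alpha_k)\Bigr).
\]

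The crux is to bound the bracket from below \emph{uniformly} in the rotation angle $\psi$, which is fixed by the full configuration and not at my disposal. Here I would use the product-to-sum identity $\sin^2 a + \sin^2(a-\beta) = 1 - \cos\beta\,\cos(2a-\beta)$ with $\beta = \pi\alpha_k$, whose minimum over $a = \psi/2$ equals $1 - |\cos\pi\alpha_k| = 1 - \cos(\pi\|\alpha_k\|)$; the passage to $\|\alpha_k\| = \dist(\alpha_k,\Z)$ is exactly where the periodicity of the phases (invariance under $\alpha_k\mapsto\alpha_k+1$) is used. Finally, the elementary concavity bound $\sin(\pi t/2)\ge\sqrt{2}\,t$ on $[0,\tfrac12]$ gives $1-\cos(\pi\|\alpha_k\|) = 2\sin^2(\tfrac{\pi}{2}\|\alpha_k\|)\ge 4\|\alpha_k\|^2$, and therefore $1 - S \ge 8\eps\|\alpha_k\|^2 \ge 2\pi\eps\|\alpha_k\|^2$, which is \eqref{elem}.

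I expect the only real subtlety to be this last uniformity in $\psi$: a naive reduction that retains a single term (the larger of the two $\sin^2$) produces the constant $4$, which is smaller than $2\pi$ and fails. Keeping \emph{both} the $j=1$ and $j=k$ terms and minimizing their sum over $\psi$ is what upgrades the constant to $8\ge 2\pi$, so the plan's main obstacle is carrying out the two-phase optimization with a sharp enough constant rather than any deep difficulty.
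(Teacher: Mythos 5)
Your proof is correct, and it takes a genuinely different route from the paper's. The paper never introduces a rotation angle: it applies the triangle inequality to get $\bigl|\sum_{j=1}^m p_je^{-2\pi i\alpha_j}\bigr|\le \bigl|p_1+p_ke^{-2\pi i\alpha_k}\bigr|+(1-p_1-p_k)$, assumes $p_1\ge p_k$ (swapping the roles of $p_1$ and $p_k$ otherwise), bounds $\bigl|p_1+p_ke^{-2\pi i\alpha_k}\bigr|\le(p_1-p_k)+p_k\bigl|1+e^{-2\pi i\alpha_k}\bigr|$, and finishes with $\bigl|1+e^{-2\pi i\alpha_k}\bigr|=2|\cos(\pi\alpha_k)|\le 2(1-\pi\|\alpha_k\|^2)$, which assembles to the bound $1-2\pi p_k\|\alpha_k\|^2\le 1-2\pi\eps\|\alpha_k\|^2$. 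Your argument instead rotates the sum onto the positive real axis, uses the exact identity $1-S=2\sum_j p_j\sin^2(\psi/2-\pi\alpha_j)$, discards all but the two nonnegative terms $j=1,k$, and then must optimize over the unknown angle $\psi$ --- the step you correctly identify as the crux, and which you handle via $\sin^2 a+\sin^2(a-\beta)=1-\cos\beta\cos(2a-\beta)$ together with the chord bound $\sin(\pi t/2)\ge \sqrt2\,t$ on $[0,\half]$; all of these steps check out, including the passage $|\cos(\pi\alpha_k)|=\cos(\pi\|\alpha_k\|)$ and the final comparison $8\ge 2\pi$. What each approach buys: the paper's triangle-inequality reduction eliminates the overall phase before any uniformity issue can arise, giving a three-line proof; your exact identity loses nothing until terms are dropped and yields the stronger constant $8>2\pi$ (sharp in the trigonometric step at $\|\alpha_k\|=\half$), while also making transparent why both the $j=1$ and $j=k$ terms must be retained rather than just the larger one. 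Both proofs share the same skeleton --- reduce to the two phases indexed by $1$ and $k$, then apply a quadratic lower bound on $1-|\cos(\pi\,\cdot\,)|$ near the integers --- but the mechanism of reduction is genuinely different.
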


\begin{proof}
Fix $k\in \{2,\ldots,m\}$. We can estimate
$$
\left|\sum_{j=1}^m p_j  e^{-2\pi \alpha_j}\right|  = \left|p_1 + \sum_{j=2}^m p_j  e^{-2\pi \alpha_j}\right|   \le \left|p_1 + p_k e^{-2\pi i \alpha_k}\right| + (1-p_1 - p_k).
$$
Assume that $p_1\ge p_k$, otherwise, write $|p_1 + p_k e^{-2\pi i \alpha_k}| = |p_1 e^{2\pi i \alpha_k} + p_k|$ and repeat the argument. Then
observe that $|p_1 + p_k e^{-2\pi i \alpha_k}|\le (p_1-p_k) + p_k|1 + e^{-2\pi i \alpha_k}|$ and $|1 + e^{-2\pi i \alpha_k}| = 2|\cos(\pi \alpha_k)|\le 2(1 - \pi \|\alpha_k\|^2)$. This implies the desired inequality.
\end{proof}

Recall \eqref{eq1}:
$$
\muhat(\xi) = \prod_{n=0}^\infty\left(\sum_{j=1}^m p_j  e^{-2\pi i \langle \xi,  A^n a_j\rangle}\right).
$$
For $\xi\in \R^d$, with $\|\xi\|_\infty \ge 1$, let $\eta(\xi) = (A^t)^{N(\xi)} \xi$, where $N(\xi)\ge 0$ is maximal, such that $\|\eta(\xi)\|_\infty \ge 1$. Then $\|\eta(\xi)\|_\infty \in [1, \|A^t\|_\infty]$ and \eqref{eq1} implies
\begin{equation} \label{eq2}
\muhat(\xi) = \muhat(\eta(\xi))\cdot \prod_{n=1}^{N(\xi)} \left(\sum_{j=1}^m p_j  e^{-2\pi i \langle \eta(\xi),\,  A^{-n} a_j\rangle}\right).
\end{equation}

\subsection{Proof of Theorem~\ref{th-decay2}}
First we show that the case of a general digit set may be reduced to $\Dk = \{0, (1,\ldots,1)\}$.
We start with the formula \eqref{eq2}, which under the current assumptions becomes
$$
\muhat(\xi) = \muhat(\eta(\xi))\cdot \prod_{n=1}^{N(\xi)} \Bigl(\sum_{j=1}^m p_j  \exp\Bigl[-2\pi i \sum_{k=1}^d \eta_k a_j^{(k)} \theta_k^n\Bigr]\Bigr),
$$
where $a_j = (a_j^{(k)})_{k=1}^d$ and $\eta(\xi) = (\eta_k)_{k=1}^d$. Note that 
$\|\eta(\xi)\|_\infty \in [1, \max_j|\theta_j|]$.
Assume without loss of generality that $a_1=0$, then we have by \eqref{elem}, for any fixed $j\in \{2,\ldots,m\}$:
$$
|\muhat(\xi)| \le \prod_{n=1}^{N(\xi)} \Bigl( 1 - 2\pi\eps\Big\|\sum_{k=1}^d \eta_k a_j^{(k)} \theta_k^n \Bigr\|^2\Bigr),
$$
where  $\|\cdot\|$  denotes the distance to the nearest integer. Further, we can assume that all the coordinates of $a_j$ are non-zero; otherwise, we can work in the subspace $$\Hk:=\{x\in \R^d:\
x_k = 0 \iff a_j^{(k)}=0\}$$ and with the
corresponding variables $\theta_k$, and then get the exceptional set of zero $\Lk^d$ measure as a product of a set of zero measure in  
$\Hk$ and the entire $\Hk^\perp$.
 Finally, apply a linear change of variables, so that $a_j^{(k)}=1$ for all $k$, to obtain:
\beq \label{eq-decay3}
|\muhat(\xi)| \le \prod_{n=1}^{N(\xi)} \Bigl( 1 - 2\pi\eps\Big\|\sum_{k=1}^d \eta_k \theta_k^n \Bigr\|^2\Bigr).
\eeq
This is exactly the situation corresponding to the measure $\mu(\thb,\pb)$, and we will be showing (typical) power decay for the right-hand side of \eqref{eq-decay3}.
This completes the reduction.

\smallskip

\begin{comment}

We will use the notation $[N] = \{1,\ldots,N\},\ [n,N] = \{n,\ldots,N\}$. For $k\in [d]$ define
$$
F_k  =\bigl\{\xi\in \R^d:\ \|\xi\|_\infty \ge 1,\ |\eta(\xi)_k| = \|\eta(\xi)\|_\infty\bigr\},
$$
where $\eta(\xi) = A^{N(\xi)}\xi$ and $N(\xi)$ is maximal with $\|\eta(\xi)\|_\infty \ge 1$. Clearly, $\bigcup_{k\le d} F_k = \{\xi:\ \|\xi\|_\infty \ge 1\}$.
The plan is as follows: for each $k\in [d]$ we will find an exceptional set $\Ek_k$ of Lebesgue measure zero, such that for
$$
\thb\not\in \Ek_k\implies |\muhat(\thb,p)(\xi)| \le O_\mu(|\xi|)^{-\alpha},\ \ \mbox{for all}\  \xi\in F_k. 
$$
Then  $\Ek = \bigcup_{k=1}^d\Ek_k$ will have the desired property.

Assume, without loss of generality, that $k=d$, so the goal is to estimate the Fourier transform on $F_d$.

\end{comment}

Next we use a variant of the Erd\H{o}s-Kahane argument, see e.g.\ \cite{PSS00,ShSol16} for other versions of it.
Intuitively, we will get power decay if $\|\sum_{k=1}^d \eta_k \theta_k^n\|$ is uniformly bounded away from zero for a set of $n$'s of positive lower density, uniformly in $\eta$.

Fix $c_1>0$ and $1 < b_1 < b_2 < \infty$, and
consider the compact set 
$$
H = \bigl\{\thb = (\theta_1,\ldots,\theta_d)\in ([-b_2, -b_1]\cup [b_1,b_2])^d:\ |\theta_i-\theta_j|\ge c_1,\ i\ne j \bigr\}.
$$
We will use the notation $[N] = \{1,\ldots,N\},\ [n,N] = \{n,\ldots,N\}$. 
For $\rho,\delta>0$ we define the ``bad set'' at scale $N$:
\beq \label{excep1} E_{H,N}(\delta,\rho)=
\left\{\thb\in H:\ \max_{\eta:\ 1\le \|\eta\|_\infty \le b_2} \frac{1}{N}  
\Bigl|\Bigl\{n\in [N]:\ \Big\|\sum_{k=1}^d \eta_k \theta_k^n \Bigr\| < \rho\Bigr\}\Bigr| >1-\delta\right\}.
\eeq
Now we can define the exceptional set:
$$
\Ek_{H}(\delta,\rho):= \bigcap_{N_0=1}^\infty \bigcup_{N=N_0}^\infty E_{H,N}(\delta,\rho).
$$
Theorem~\ref{th-decay2} will immediately follow from the next two propositions.

\begin{prop} \label{prop-excep}
For any positive $\rho$ and $\delta$, we have $\mu(\thb,\pb) \in \Dsc_d(\alpha)$ 
whenever $\thb\in H\setminus \Ek_H(\delta,\rho)$, where $\alpha$ depends only on $\delta,\rho,H$, and $\eps = \min\{p,1-p\}$.
\end{prop}

\begin{prop} \label{prop-measure}
There exist  $\rho=\rho_H>0$ and $\delta=\delta_H>0$ such that $\Lk^d(\Ek_H(\delta,\rho))=0$.
\end{prop}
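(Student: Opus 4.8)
The plan is to prove geometric decay of $\Lk^d(E_{H,N}(\delta,\rho))$ in $N$, for one fixed sufficiently small pair $(\rho,\delta)$; since $\Ek_H(\delta,\rho)=\bigcap_{N_0}\bigcup_{N\ge N_0}E_{H,N}(\delta,\rho)$, summability and the Borel--Cantelli lemma then give $\Lk^d(\Ek_H(\delta,\rho))=0$. For $\thb\in E_{H,N}$ I fix a witness $\eta$, set $u_n=\sum_{k=1}^d\eta_k\theta_k^n$ and $G=\{n\in[N]:\|u_n\|<\rho\}$ (so $|G|>(1-\delta)N$), and record the nearest integers $m_n$ to $u_n$ for $n\in G$. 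The point that eliminates the troublesome $\max_\eta$ is that $(u_n)$ satisfies the order-$d$ linear recurrence with characteristic polynomial $\prod_{k=1}^d(t-\theta_k)$, whose coefficients are the elementary symmetric functions $e_0,\dots,e_{d-1}$ of $\thb$ and do not involve $\eta$. Thus a run of consecutive near-integers constrains $\thb$ alone, and I can cover $E_{H,N}$ by cells indexed by the admissible integer sequences $(m_n)_{n\in G}$, taking the union over these sequences in place of the union over $\eta$.

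The engine is this recurrence together with its Hankel/Vandermonde structure. A window of $2d$ consecutive good indices at base point $n^*$ yields the system $M(e_0,\dots,e_{d-1})^t=r$ with $M_{ij}=m_{n^*+i+j}$ and $r_i=m_{n^*+d+i}$, satisfied by the true $(e_j)$ up to residual $O(\rho)$; here, in magnitude, $|\det M|\asymp(\prod_k|\eta_k|)(\prod_k|\theta_k|)^{n^*}\prod_{i<j}(\theta_i-\theta_j)^2$ is bounded below using $|\theta_i-\theta_j|\ge c_1$, so solving the system confines $(e_j)$, hence $\thb$ (the map from symmetric functions to roots being Lipschitz off the diagonal), to a small box. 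Small $\delta$ guarantees, by pigeonhole, many such length-$2d$ windows inside $G$. Witnesses with a vanishing or very small coordinate of $\eta$ are genuinely degenerate ($\det M$ is no longer bounded below); these I would dispatch by induction on $d$, since the near-integer condition then reduces to a lower-dimensional one on the remaining coordinates and the resulting exceptional set is a product of a null set with a bounded range of the omitted coordinate.

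With the non-degenerate case in hand, $\Lk^d(E_{H,N})$ is bounded by the number of admissible integer sequences times the measure of a cell. The cell measure I estimate from the window of largest base point (tightest confinement, since the entries of $M$ are largest there); the number of sequences I estimate by a transition count, in which at a good step following a good window the recurrence pins the next integer $m_{n+d}$ to within $O\!\big(\rho(1+\sum_j|e_j|)\big)$ of $\sum_j e_j m_{n+j}$, i.e. to finitely many values. Fixing $\rho$ and $\delta$ small enough to force the branching rate below the shrinkage rate produces the desired $\gamma<1$.

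The main obstacle is the spread of scales $b_2/b_1$, that is, the ratios $|\theta_i|/|\theta_j|$. In the transition count the predicted quantity $\sum_j e_j m_{n+j}$ carries terms of size $\sim b_2^{\,n}$, whereas the confinement of $(e_j)$ inherited from earlier windows degrades like $\|M^{-1}\|\sim b_2^{\,n(d-1)}/(\prod_k|\theta_k|)^{n}$, which already for $d=2$ blows up once $b_2>\theta_1\theta_2$. When these scales are mismatched the naive per-step branching grows with $n$ and the crude ``confine all of $\thb$ at one window'' bookkeeping fails. Overcoming this is the genuinely multidimensional difficulty: I would instead confine the eigenvalues at \emph{matched} scales, peeling the dominant term $\eta_{k_0}\theta_{k_0}^n$ off $u_n$ first (so that $m_{n+1}/m_n$ pins $\theta_{k_0}$ at the correct scale $\rho b_2^{-n}$) and inducting on the renormalized power sum $\sum_{k\ne k_0}\eta_k\theta_k^n$, while controlling the confluent-Vandermonde condition number through $|\theta_i-\theta_j|\ge c_1$. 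Balancing this refined branching against the telescoped shrinkage $(\prod_k|\theta_k|)^{-N}$, with $\rho,\delta$ chosen small, is the crux that makes the result a non-trivial extension of the one-dimensional Erd\H{o}s--Kahane argument.
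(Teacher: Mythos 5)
You correctly identify the crux --- that joint recovery of all of $\thb$ from the integer data breaks down when the scales $|\theta_k|$ are spread out --- but your proposed resolution of it rests on a false quantitative claim, so the gap remains open. The claim that after peeling, ``$m_{n+1}/m_n$ pins $\theta_{k_0}$ at the correct scale $\rho b_2^{-n}$'' is wrong: the subdominant terms contaminate the ratio at scale $|\theta_{k_0}|\,(|\theta_{k_1}|/|\theta_{k_0}|)^n$, where $\theta_{k_1}$ is the second-largest eigenvalue. Concretely, for $d=2$, $\theta_1=2$, $\theta_2=4$, $\eta=(1,1)$, one has $m_{n+1}/m_n = 4 - 2^{1-n} + O(4^{-n})$, so the error is of order $2^{-n}$, exponentially larger than the claimed $4^{-n}$; a single Hankel window suffers the same loss (its inverse norm is of order $(b_2/\prod_k|\theta_k|)^n$ up to constants, which for this example is again $2^{-n}$, not $4^{-n}$). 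Because of this, the peeled term $\eta_{k_0}\theta_{k_0}^n$ is known only up to errors that exceed $1$ for large $n$, so the ``renormalized power sum'' $\sum_{k\ne k_0}\eta_k\theta_k^n$ carries \emph{no} near-integer information, and the induction on $d$ you invoke has nothing to recurse on. A separate gap: your degenerate/non-degenerate dichotomy on $\eta$ (needed since $\det M$ contains the factor $\prod_k|\eta_k|$) omits witnesses with some $|\eta_k|$ small but nonzero; the bad set is a supremum over a continuum of $\eta$, so uniformity over such witnesses is required, and neither branch of your argument provides it --- for such $\eta$ the term $\eta_k\theta_k^n$ is eventually non-negligible, so the problem does not reduce to dimension $d-1$.

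The missing idea, and the way the paper sidesteps both problems, is to never confine $\thb$ jointly. The paper splits the bad set according to which coordinate of $\eta$ is dominant, say $|\eta_d|=\|\eta\|_\infty\ge 1$, and then works in one-dimensional slices with $\thb'=(\theta_1,\ldots,\theta_{d-1})$ \emph{fixed}: the differences $A_n^{(0)}=K_n$, $A_n^{(j)}=A_{n+1}^{(j-1)}-\theta_j A_n^{(j-1)}$ use the exact values of $\theta_1,\ldots,\theta_{d-1}$ to annihilate their contributions, leaving $\eta_d\prod_{k<d}(\theta_d-\theta_k)\theta_d^n$ up to bounded error. Each admissible integer sequence $(K_n)_{n\le N}$ then confines only $\theta_d$, to an interval of length $O(b_1^{-N})$, the per-step branching is $O(1)$ uniformly in $\eta$ and $\thb'$, so the slice is covered by $\exp(O_H(\delta\log(1/\delta)N))$ such intervals, and Fubini upgrades slice-wise $\Lk^1$-nullity to $\Lk^d(\Ek_H(\delta,\rho))=0$. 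Note that this route needs a lower bound only on the dominant coordinate $|\eta_d|$, never on the others, which dissolves your degeneracy issue, and the covering scale $b_1^{-N}$ already accommodates the worst case, so no multi-scale balancing is needed at all.
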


\begin{proof}[Proof of Proposition~\ref{prop-excep}]
Suppose that $\thb\in H\setminus \Ek_H(\delta,\rho)$. This implies that there is $N_0\in \N$ such that 
$\thb\not\in E_{H,N}(\delta,\rho)$
for all $N\ge N_0$.
 Let $\xi\in \R^d$ be such that $\|\xi\|_\infty > b_2^{N_0}$. Then $N= N(\xi)\ge N_0$, where $\eta = \eta(\xi) = A^{N(\xi)}\xi$ and $N(\xi)$ is maximal with $\|\eta\|_\infty\ge 1$. 
  From the fact that $\thb\not\in E_{H,N}(\delta,\rho)$ it follows that 
$$
\frac{1}{N}  
\Bigl|\Bigl\{n\in [N]:\ \Big\|\sum_{k=1}^d \eta_k \theta_k^n \Bigr\| < \rho\Bigr\}\Bigr| \le 1-\delta.
$$
Then by \eqref{eq-decay3},
$$
|\muhat(\thb,\pb)(\xi)| \le (1 - 2\pi\eps \rho^2)^{\lfloor\delta N\rfloor}.
$$
By the definition of $N=N(\xi)$ we have 
$$
\|\xi\|_\infty \le b_2^{N+1}.
$$
It follows that
$$
|\muhat(\thb,\pb)(\xi)| = %O_{H,\eps}\cdot {\|\xi\|}^{\, \textstyle{\delta\log (1 - c_\eps c^2_H)/\log b_2}}_{_{\scriptstyle{\infty}}} = 
O_{H,\eps}(1)\cdot {\|\xi\|}_{_{\scriptstyle{\infty}}}^{-\alpha},
$$
for $\alpha = -\delta\log (1 - 2\pi\eps \rho^2)/\log b_2$, and the proof is complete.
\end{proof}

\begin{proof}[Proof of Proposition \ref{prop-measure}]
It is convenient to express the exceptional set as a union, according to a dominant coordinate of $\eta$ (which may  be non-unique, of course):
$E_{H,N}(\delta,\rho) = \bigcup_{j=1}^d E_{H,N,j}(\delta,\rho)$, where
\beq \label{excep11}
E_{H,N,j}(\delta,\rho) := \left\{\thb\in H:\ \exists\, \eta,\ 1 \le |\eta_j| =  \|\eta\|_\infty\le b_2,\ \frac{1}{N}  
\Bigl|\Bigl\{n\in [N]:\ \Big\|\sum_{k=1}^d \eta_k \theta_k^n \Bigr\| <\rho \Bigr\}\Bigr| >1-\delta\right\}.
\eeq
It is easy to see that $E_{H,N,j}(\delta,\rho)$ is  measurable. Observe that 
$$
\Ek_H(\delta,\rho):= \bigcup_{j=1}^d \Ek_{H,j}(\delta,\rho),\ \ \ \mbox{where}\ \ \ 
\Ek_{H,j}(\delta,\rho):= \bigcap_{N_0=1}^\infty \bigcup_{N=N_0}^\infty E_{H,N,j}(\delta,\rho).
$$
It is, of course, sufficient to show that $\Lk^d(\Ek_{H,j}(\delta,\rho))=0$ for every $j\in [d]$, for some $\delta,\rho>0$. Without loss of generality, assume that $j=d$.
Since $\Ek_{H,d}(\delta,\rho)$ is measurable, the desired claim will follow if we prove that every slice of $\Ek_{H,d}(\delta,\rho)$ in the direction of the $x_d$-axis has zero $\Lk^1$ measure.
Namely, for fixed $\thb' = (\theta_1,\ldots,\theta_{d-1})$  let
%which is in the projection of $H$ to the first $d-1$ coordinates, let
$$
\Ek_{H,d}(\delta,\rho,\thb'):= \{\theta_d:\ (\thb',\theta_d)\in \Ek_{H,d}(\delta,\rho)\}.
$$
We want to show that $\Lk^1(\Ek_{H,d}(\delta,\rho,\thb'))=0$ for all $\thb'$.
Clearly,
$$
\Ek_{H,d}(\delta,\rho,\thb'):= \bigcap_{N_0=1}^\infty \bigcup_{N=N_0}^\infty E_{H,N,d}(\delta,\rho,\thb'),
$$
where
\beq \label{excep1} E_{H,N,d}(\delta,\rho,\thb')=
\left\{\theta_d:\ (\thb',\theta_d)\in H:\ \max_{\substack{\\[1.1ex] \eta:\ 1\le |\eta_d|\le b_2\\[1.1ex] \|\eta\|_\infty =|\eta_d|}} \frac{1}{N}  
\Bigl|\Bigl\{n\in [N]:\ \Big\|\sum_{k=1}^d \eta_k \theta_k^n \Bigr\| <\rho \Bigr\}\Bigr| >1-\delta\right\}
\eeq

\begin{lemma} \label{lem-EK}
There exists a constant $\rho>0$ such that, for any $N\in \N$ and $\delta\in (0,\half)$, the set $E_{H,N,d}(\delta,\rho,\thb')$ can be covered
by $\exp(O_H(\delta \log(1/\delta)N))$ intervals of length $b_1^{-N}$.
\end{lemma}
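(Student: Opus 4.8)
The plan is to run a version of the Erd\H{o}s--Kahane argument, encoding each bad $\theta_d$ by a sequence of integers and bounding the number of such sequences. Fix $\thb'=(\theta_1,\dots,\theta_{d-1})$. For $\theta_d\in E_{H,N,d}(\delta,\rho,\thb')$ choose a witnessing $\eta$ with $1\le|\eta_d|=\|\eta\|_\infty\le b_2$ and a good set $G\subseteq[N]$, $|G|>(1-\delta)N$, on which $\|s_n\|<\rho$, where $s_n:=\sum_{k=1}^d\eta_k\theta_k^n$. For $n\in G$ record the nearest integer $m_n$ to $s_n$, so that $s_n=m_n+\eps_n$ with $|\eps_n|<\rho$. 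I would first reduce the statement to two claims: (i) the number of admissible integer sequences $(m_n)_{n\in[N]}$ is at most $\exp(O_H(\delta\log(1/\delta)N))$, and (ii) each such sequence confines $\theta_d$ to an interval of length $O_H(b_1^{-N})$, hence to $O_H(1)$ intervals of length $b_1^{-N}$.

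The engine for both claims is the order-$d$ linear recurrence satisfied by $(s_n)$. Its characteristic polynomial factors as $P(t)=(t-\theta_d)Q(t)$ with $Q(t)=\prod_{l=1}^{d-1}(t-\theta_l)$ determined by $\thb'$; hence the recurrence coefficients are affine in $\theta_d$, and on a window $[n,n+d]\subseteq G$ one gets $m_{n+d}=A_n+\theta_d B_n+O(\rho)$, where $A_n,B_n$ depend only on $m_n,\dots,m_{n+d-1}$ and $\thb'$. The decisive point is that $B_n=\sum_j\beta_j m_{n+j}$ with $\beta_j$ the coefficients of $Q$, so $\sum_j\beta_j s_{n+j}=\sum_k\eta_k Q(\theta_k)\theta_k^n=\eta_d Q(\theta_d)\theta_d^n$, since $Q(\theta_l)=0$ for $l<d$. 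Using $|\eta_d|\ge1$ (the dominant coordinate), $|Q(\theta_d)|=\prod_{l<d}|\theta_d-\theta_l|\ge c_1^{d-1}$ (the separation hypothesis), and $|\theta_d|\ge b_1$, I obtain $|B_n|\ge\half c_1^{d-1}b_1^n$ once $n\ge n_0(H)$, absorbing the $O(\rho)$ error from replacing $s$ by $m$. This growth yields claim (ii): on any fully good window near $N$, $\theta_d=(m_{n+d}-A_n)/B_n+O(\rho/|B_n|)$ is pinned to within $O(b_1^{-n})=O(b_1^{-N})$, independently of the (undiscretized) vector $\eta$.

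For claim (i) I would build the sequence left to right via the recurrence while maintaining an interval $I\ni\theta_d$ with the invariant $|B_n|\,|I|=O(\rho)$. On a window all of whose indices lie in $G$, the candidate $A_n+\theta_d B_n$ ranges over an interval of length $O(\rho)<1$ as $\theta_d$ runs through $I$, so it contains at most one integer; thus $m_{n+d}$ is uniquely determined, and the good window merely refines $I$, restoring the invariant since $|B_n|\sim b_1^n$ grows geometrically. A window meeting the bad set $[N]\setminus G$, or among the $O(d)$ windows following a bad index during which the invariant is being re-established, produces only $O_H(1)$ choices for $m_{n+d}$, because the coefficients of $P$ are bounded on $H$ and the bad fractional parts lie in $[-\half,\half]$. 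As each bad index affects only $O(d)$ windows, at most $O_H(\delta N)$ windows branch; multiplying the $\binom{N}{\le\delta N}\le\exp(O(\delta\log(1/\delta)N))$ choices of the good/bad pattern by the $\exp(O_H(\delta N))$ branching (and the $O_H(1)$ choices for the bounded initial segment $m_1,\dots,m_d$ and the first $n_0$ windows) gives claim (i).

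The main obstacle, and the place where this is genuinely more than the one-dimensional argument, is the growth estimate for $B_n$. In one dimension $B_n$ is essentially $m_n\sim\theta^n$ and positivity is automatic; here $B_n$ is a $\thb'$-weighted combination of $d$ consecutive integers, and I must show it neither vanishes nor is swamped by the contributions of the $\theta_l$ with $l<d$, which may well exceed $\theta_d$ in modulus. The exact cancellation $Q(\theta_l)=0$ removes those terms, but it relies on using the true characteristic polynomial and on the separation and dominance hypotheses simultaneously. Propagating the $O(\rho)$ errors from $m$ versus $s$ through this cancellation, and choosing $\rho=\rho_H$ small enough (and $n_0$ large enough) that both the invariant and the uniqueness of the integer in an $O(\rho)$-interval hold, is the delicate part of the estimate.
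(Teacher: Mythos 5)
Your proposal is correct, and its algebraic engine is identical to the paper's, though you package it differently. Your annihilator $B_n=\sum_j\beta_j m_{n+j}$, with $\beta_j$ the coefficients of $Q(t)=\prod_{l<d}(t-\theta_l)$, is exactly the paper's $A_n^{(d-1)}$: the recursion \eqref{induc1} computes $Q$ of the shift operator applied to $(K_n)$, one linear factor at a time, and your identity $\sum_j\beta_j s_{n+j}=\eta_d Q(\theta_d)\theta_d^n$ and lower bound $|B_n|\ge\half c_1^{d-1}b_1^n$ are precisely \eqref{for1} and \eqref{for3}. The genuine difference is in the counting step. The paper eliminates $\theta_d$ entirely: using $A_{n+1}^{(d-1)}/A_n^{(d-1)}\approx\theta_d$ it predicts $K_{n+d+1}$ from $K_n,\ldots,K_{n+d}$ by the rational function \eqref{rational}, so the tree of admissible integer sequences is counted with no auxiliary state. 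You instead keep $\theta_d$ as an unknown confined to an interval $I$, maintain the invariant $|B_n|\,|I|=O(\rho)$, and use the predictor $A_n+\theta_d B_n$, which is affine in $\theta_d$ and uses a window one index shorter. Your bookkeeping is closer to the classical Erd\H{o}s--Kahane argument of \cite{PSS00}; the paper's choice buys a purely combinatorial count on integer sequences at the price of a rational (rather than affine) predictor. Both need the same inputs ($|\eta_d|\ge1$, the separation $c_1$, $|\theta_d|\ge b_1$) and both give $\exp(O_H(\delta\log(1/\delta)N))$, so this is a variant of the same proof rather than a different one.

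One step needs repair. For your claim (ii) you invoke ``any fully good window near $N$'', but such a window need not exist: the fewer than $\delta N$ bad indices can all cluster at the end of $[N]$, in which case the last fully good window pins $\theta_d$ only to within $O(b_1^{-(1-\delta)N})$. The fix is already contained in your setup: goodness is needed only for uniqueness of the predicted integer, never for the approximation of $\theta_d$. Since $|\eps_n|\le\half$ always, the final window gives $m_N=A_{N-d}+\theta_d B_{N-d}+O_H(1)$, hence pins $\theta_d$ to within $O_H(1)/|B_{N-d}|=O_H(b_1^{-N})$; this is exactly how the paper obtains \eqref{kuka}, whose underlying estimate \eqref{esta1} holds for all $n\ge n_0$ regardless of the sizes of the $\eps_n$. (Even without the fix, the loss $b_1^{\delta N}=\exp(O_H(\delta N))$ would be absorbed into the allowed $\exp(O_H(\delta\log(1/\delta)N))$ count, but as written the step is not justified.)
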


We first complete the proof of the proposition, assuming the lemma.
By Lemma~\ref{lem-EK},
$$
\Lk^1\left(\bigcup_{N=N_0}^\infty E_{H,N,d}(\delta,\rho,\thb')\right) \le \sum_{N=N_0}^\infty \exp(O_H(\delta \log(1/\delta)N))\cdot b_1^{-N} \to 0, \ \ N_0\to \infty,
$$
provided $\delta>0$ is so small that $\log b_1 >O_H(\delta \log(1/\delta))$. Thus $\Lk^1(\Ek_{H,d}(\delta,\rho,\thb'))=0$. \end{proof}

\begin{proof}[Proof of Lemma \ref{lem-EK}]
Fix $\thb'$ in the projection of $H$ to the first $(d-1)$ coordinates and $\eta\in \R^d$, with $1 \le |\eta_d| = \|\eta\|_\infty \le b_2$. 
Below all the constants implicit in the $O(\cdot)$ notation are allowed to depend on $H$ and $d$.
Let $\theta_d$ be such that $(\thb',\theta_d)\in H$ and write
$$
\sum_{k=1}^d \eta_k \theta_k^n = K_n + \eps_n, \ \ n\ge 0,
$$
where $K_n\in \Z$ is the nearest integer to the expression in the left-hand side, so that $|\eps_n| \le \half$. We emphasize that $K_n$ depends on $\eta$ and on $\theta_d$.
Define $A_n^{(0)} = K_n$, $\wt A_n^{(0)} = K_n + \eps_n$, and then for all $n$ inductively: 
\beq\label{induc1}
A_n^{(j)} = A_{n+1}^{(j-1)} - \theta_j  A_{n}^{(j-1)};\ \ \ \ \ \wt A_n^{(j)} = \wt A_{n+1}^{(j-1)} - \theta_j  \wt A_{n}^{(j-1)},\ \ j=1,\ldots, d-1.
\eeq
It is easy to check by induction that
$$
\wt A_n^{(j)} = \sum_{i=j+1}^d \eta_i \prod_{k=1}^j (\theta_i - \theta_k) \theta_i^n,\ \ j=1,\ldots, d-1,
$$
hence
\beq\label{for1}
\wt A_n^{(d-1)} = \eta_d \prod_{k=1}^{d-1} (\theta_d - \theta_k) \theta_d^n;\ \ \ \ \ \theta_d = \frac{\wt A_{n+1}^{(d-1)}}{\wt A_n^{(d-1)}}\,,\ \ \ n\in \N.
\eeq
We have $\|\eta\|_\infty \le b_2$ and $|\wt A_n^{(0)} - A_n^{(0)}| \le |\eps_n|$, and then by induction, by \eqref{induc1},
\beq\label{induc2}
|\wt A_n^{(j)} -A_n^{(j)}| \le (1+b_2)^j \max\{|\eps_n|,\ldots,|\eps_{n+j}|\},  \ j=1,\ldots, d-1.
\eeq
Another easy calculation gives
\begin{eqnarray} \nonumber
K_{n+d+1} & = &  \theta_1 K_{n+d} + A_{n+d}^{(1)}=\cdots \\[1.1ex]
& = & \bigl[\theta_1 K_{n+d} +\theta_2 A_{n+d-1}^{(1)}+ \cdots + \theta_{d-1} A_{n+2}^{(d-2)}\bigr] + A_{n+2}^{(d-1)} 
 \label{for2}
\end{eqnarray}
Since $\frac{ A_{n+2}^{(d-1)}}{A_{n+1}^{(d-1)}}\approx \frac{ \wt A_{n+1}^{(d-1)}}{\wt A_n^{(d-1)}}=\theta_d$, we have
\begin{eqnarray} \label{rational}
K_{n+d+1} & \approx & \bigl[\theta_1 K_{n+d} +\theta_2 A_{n+d-1}^{(1)}+ \cdots + \theta_{d-1} A_{n+2}^{(d-2)}\bigr] +\frac{ {(A_{n+1}^{(d-1)})}^2}{A_n^{(d-1)}} \\
& =: & R_{\theta_1,\ldots,\theta_{d-1}}(K_n,\ldots,K_{n+d}),\nonumber
\end{eqnarray}
\begin{sloppypar}
\noindent where $R_{\theta_1,\ldots,\theta_{d-1}}(K_n,\ldots,K_{n+d})$ is a rational function, depending on the (fixed) parameters $\theta_1,\ldots,\theta_{d-1}$.
To make the approximate equality precise, note that by \eqref{for1} and our assumptions,
$$
\bigl| \wt A_n^{(d-1)}\bigr| \ge c_1^{d-1} b_1^n,
$$
where $b_1 > 1$, and $|\wt A_n^{(d-1)} -A_n^{(d-1)}| \le (1+ b_2)^{d-1}/2$ by \eqref{induc2}. Hence
\beq\label{for3}
\bigl|A_n^{(d-1)}\bigr| \ge c_1^{d-1} b_1^n/2 \ \ \mbox{for}\ n\ge n_0 = n_0(H),
\eeq
and so
$$
\bigl|A_{n+1}^{(d-1)}/A_n^{(d-1)}\bigr|  \le O(1),\ \ n\ge n_0.
$$
\end{sloppypar}
\noindent In the next estimates we assume that $n\ge n_0 (H)$. In view of the above, especially \eqref{induc2} for $j=d-1$,
\begin{eqnarray} \nonumber
\left| \frac{A_{n+1}^{(d-1)}}{A_n^{(d-1)}} - \theta_d\right| & = &  \left| \frac{A_{n+1}^{(d-1)}}{A_n^{(d-1)}} - \frac{\wt A_{n+1}^{(d-1)}}{\wt A_n^{(d-1)}}\right| \\[1.1ex]
& \le & \left| \frac{A_{n+1}^{(d-1)}-\wt A_{n+1}^{(d-1)}}{A_n^{(d-1)}} \right|  + \left|\wt A_{n+1}^{(d-1)}\right|\cdot \left| \frac{1}{A_n^{(d-1)}} - \frac{1}{\wt A_n^{(d-1)}}\right|
\nonumber \\[1.1ex]
& \le & O(1)\cdot\max\{|\eps_n|,\ldots,|\eps_{n+d}|\}\cdot \bigl|A_n^{(d-1)}\bigr|^{-1}. \nonumber
\end{eqnarray}
It follows that, on the one hand, 
\beq\label{esta1}
\left| \frac{A_{n+1}^{(d-1)}}{A_n^{(d-1)}} - \theta_d\right|  \le O(1)\cdot b_1^{-n};
\eeq
and on the other hand, 
\beq\label{esta2}
\left| \frac{{(A_{n+1}^{(d-1)})}^2}{A_n^{(d-1)}} - A_{n+2}^{(d-1)}\right| \le  O(1)\cdot \max\{|\eps_n|,\ldots,|\eps_{n+d+1}|\}.
\eeq
Note that $A_n^{(j)}$, for $j\in [d-1]$, is a linear combination of $K_n, K_{n+1},\ldots,K_{n+j}$ with coefficients that are polynomials in the (fixed) parameters $\theta_1,\ldots,\theta_{d-1}$,
hence the inequality \eqref{esta1} shows that
\smallskip
\beq\label{kuka}
\mbox{\fbox{given $K_n,\ldots,K_{n+d}$,\ \ we have an \ \ $O(1)\cdot b_1^{-n}$-approximation of $\theta_d$.}}
\eeq

\smallskip

\noindent
The inequality \eqref{esta2} yields, using  \eqref{rational} and \eqref{for2}, that, for $n\ge n_0$,
$$
|K_{n+d+1} - R_{\theta_1,\ldots,\theta_{d-1}}(K_n,\ldots,K_{n+d})|\le O(1)\cdot \max\{|\eps_n|,\ldots,|\eps_{n+d+1}|\}.
$$
Thus we have:

\smallskip

\begin{enumerate}
\item[(i)] Given $K_n,\ldots, K_{n+d}$, there are at most $O(1)$ possible values for $K_{n+d+1}$, uniformly in $\eta$ and $\theta_1,\ldots,\theta_{d-1}$. There are also $O(1)$ possible values for
$K_1,\ldots,K_{n_0}$ since $\|\eta\|_\infty$ and $\|\thb\|$ are bounded above by $b_2$.
\item[(ii)] There is a constant $\rho=\rho(H)>0$ such that if $\max\{|\eps_n|,\ldots,|\eps_{n+d+1}|\} < \rho$, then $K_n,\ldots, K_{n+d}$ uniquely determine $K_{n+d+1}$, as the nearest integer to 
$R_{\theta_1,\ldots,\theta_{d-1}}(K_n,\ldots,K_{n+d})$, again independently of $\eta$ and $\theta_1,\ldots,\theta_{d-1}$.
\end{enumerate}

\smallskip

\noindent Fix an $N$ sufficiently large. We claim that for each fixed set $J\subset [N]$ with $|J| \ge (1-\delta)N$, the set
$$
\Bigl\{(K_n)_{n\in [N]}:\ \eps_n=\Big\|\sum_{k=1}^d \eta_k \theta_k^n \Bigr\| < \rho\ \mbox{for some}\ \theta_d, \eta\ \mbox{and all}\ n\in J\Bigr\}
$$
has cardinality $\exp(O(\delta N))$. Indeed, fix such a $J$ and let 
$$
\wt J = \bigl\{i\in [n_0 + (d+1), N]:\ i, i-1, \ldots, i - (d+1)\in J\bigr\}.
$$
We have $|\wt J| \ge (1 - (d+2)\delta)N - n_0 - (d+1)$. If we set 
$$
\Lam_j = (K_i)_{i\in [j]},
$$
then (i), (ii) above show that $|\Lam_{j+1}| = |\Lam_j|$ if $j\in \wt J$ and $|\Lam_{j+1}| = O(|\Lam_j|)$ otherwise. Thus $|\Lam_N| \le O(1)^{(d+2)\delta N}$, as claimed.

The number of subsets $A$ of $[N]$ of size $\ge (1-\delta)N$ is bounded by $\exp(O(\delta\log(1/\delta)N))$ (using e.g.\ Stirling's formula), so we conclude that there are 
$$
\exp(O(\delta\log(1/\delta)N))\cdot \exp(O(\delta N)) = \exp(O(\delta\log(1/\delta)N))
$$
sequences $K_1,\ldots,K_N$ such that $|\eps_n| < \rho$ for at least $(1-\delta)N$ values of $n\in [N]$. Hence by \eqref{kuka} the set \eqref{excep1} can be covered by 
$\exp(O_H(\delta \log(1/\delta)N))$ intervals of radius $b_1^{-N}$, as desired.
\end{proof}

The proof of Theorem~\ref{th-decay2} is now complete.

\bigskip

{\bf Acknowledgement.} Thanks to Ariel Rapaport for corrections and helpful comments on a preliminary version.

\bibliographystyle{plain}
\bibliography{nonunif}

\end{document}